\newtheorem{theorem}{Theorem}[section]
\theoremstyle{definition}
\theoremstyle{remark}
\numberwithin{equation}{section}
\begin{document}

\title[EXACT CURVED SURFACE AREA OF A FRUSTUM OF HEMIELLIPSOID]
{ANALYTICAL EXPRESSION FOR THE EXACT CURVED SURFACE AREA OF A FRUSTUM OF HEMIELLIPSOID, THROUGH HYPERGEOMETRIC FUNCTION APPROACH}

\author[M.A. Pathan]{M.A. Pathan}
\address[M.A. Pathan]{Centre for Mathematical and Statistical Sciences (CMSS), Peechi,
Thrissur, Kerala-680653, India\newline
Department of Mathematics, Aligarh Muslim University,
Aligarh, U.P., India}
\email{mapathan@gmail.com}

\author[M. I. Qureshi]{Mohd  Idris  Qureshi}
\address[M. I. Qureshi]{Department of Applied Sciences and Humanities,
Faculty of Engineering and Technology, Jamia Millia Islamia (A Central University), New Delhi 110025, India}
\email{miqureshi\_delhi@yahoo.co.in}

\author[J. Majid]{Javid Majid$^*$}
\address[J. Majid]{Department of Applied Sciences and Humanities,
Faculty of Engineering and Technology, Jamia Millia Islamia (A Central University), New Delhi 110025, India}
\email{javidmajid375@gmail.com}

  \thanks{$^*$Corresponding author}

\keywords{ Meijer's $G$-function; Mellin-Barnes contour integral; Ellipsoid, Appell's function of second kind, General triple hypergeometric function of Srivastava; Mathematica Program.}

\subjclass[2010]{~33C20,~33C70,~97G30,~97G40.}

\begin{abstract} Our present investigation is motivated essentially by several interesting applications of generalized hypergeometric functions of one, two and more variables. The
hypergeometric functions are potentially useful and have widespread applications related to the problems in the mathematical, physical, engineering and statistical
sciences. In this article, we aim at obtaining the analytical expression (not previously found and recorded in the literature) for the exact curved surface area of a frustum of hemiellipsoid in terms of Appell's function of second kind and general triple hypergeometric series of Srivastava. The derivation is based on Mellin-Barnes type contour integral representations of generalized hypergeometric function$~_pF_q(z)$, Meijer's $G$-function and series manipulation technique. The closed form for the exact curved surface area of a frustum of hemiellipsoid is also verified numerically by using {\it Mathematica Program}.
\end{abstract}

 \maketitle

\section{Introduction and preliminaries}\label{IP}

\noindent
For the definition of Pochhammer symbols, power series form of generalized hypergeometric function ${}_pF_q(z)$ and several related results, we refer the beautiful monographs (see, e.g., \cite{And1999, Erd1, Lebedev1972, Rain, Slater1966, SriMano})

\begin{theorem} When $p=0,1,2,...$, then
\begin{equation*}
\frac{1}{\Gamma(\bf{-p})}~_4F_3\left[\begin{array}{ll}
A,B,C,D;\\
& z\\
~E,G,\bf{-p};\end{array}\right]=\frac{(A)_{p+1}(B)_{p+1}(C)_{p+1}(D)_{p+1}z^{p+1}}{(E)_{p+1}(G)_{p+1}(p+1)!}\times
\end{equation*}
\begin{equation}\label{A13.50}
\times~_4F_3\left[\begin{array}{ll}
A+p+1,B+p+1,C+p+1,D+p+1;\\
& z\\
~~~~~~~~~~~~~~~~~~E+p+1,~G+p+1,~2+p;\end{array}\right];~|z|<1.
\end{equation}
\begin{proof} Consider the left hand side of the assertion (\ref{A13.50})
\begin{equation*}
\frac{1}{\Gamma(-p)}~_4F_3\left[\begin{array}{ll}
A,B,C,D;\\
& z\\
~~E,G,-p;\end{array}\right]=\sum_{r=0}^{\infty}\frac{(A)_{r}(B)_{r}(C)_{r}(D)_{r}z^{r}}{(E)_{r}(G)_{r}~\Gamma(-p+r)~r!}
\end{equation*}
\begin{equation*}
=\sum_{r=p+1}^{\infty}\frac{(A)_{r}(B)_{r}(C)_{r}(D)_{r}z^{r}}{(E)_{r}(G)_{r}~\Gamma(-p+r)~r!}.
\end{equation*}
Replacing $r$ by $r+p+1$, after simplification and expressing the result in the form of generalized hypergeometric function, we arrive at the result (\ref{A13.50}).
\end{proof}
\end{theorem}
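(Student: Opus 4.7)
The plan is to exploit the identity $(-p)_r = \Gamma(-p+r)/\Gamma(-p)$, which converts the troublesome denominator Pochhammer symbol into a ratio of Gamma functions and absorbs the prefactor $1/\Gamma(-p)$ cleanly into a single $1/\Gamma(-p+r)$ in the summand. This is the natural move because $\Gamma(-p)$ is singular for $p \in \{0,1,2,\ldots\}$, so the left-hand side only makes sense as a regularized limit.

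Next I would observe that, after this rewrite, the summand carries a factor $1/\Gamma(-p+r)$ which vanishes for $r=0,1,\ldots,p$ (since $-p+r$ is a non-positive integer there). Thus the tail of the series starting at $r=p+1$ is the only surviving part, giving
\begin{equation*}
\frac{1}{\Gamma(-p)}\,{}_4F_3\!\left[\begin{array}{l} A,B,C,D;\\ E,G,-p;\end{array}z\right]
=\sum_{r=p+1}^{\infty}\frac{(A)_r(B)_r(C)_r(D)_r\,z^r}{(E)_r(G)_r\,\Gamma(-p+r)\,r!}.
\end{equation*}

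Then I would shift the summation index by setting $r\mapsto r+p+1$, so that the new summation runs from $r=0$ to $\infty$, and $\Gamma(-p+r)$ becomes $\Gamma(r+1)=r!$. The routine but essential algebraic step is to apply the Pochhammer splitting $(X)_{r+p+1}=(X)_{p+1}(X+p+1)_r$ to each of $A,B,C,D,E,G$, together with $(r+p+1)!=(p+1)!\,(p+2)_r$. Pulling the $r$-independent factors out in front yields exactly the prefactor $\dfrac{(A)_{p+1}(B)_{p+1}(C)_{p+1}(D)_{p+1}z^{p+1}}{(E)_{p+1}(G)_{p+1}(p+1)!}$, and the remaining sum assembles into the $_4F_3$ on the right-hand side of \eqref{A13.50}.

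I do not anticipate a serious obstacle; the only delicate point is the bookkeeping at the step where one has to recognize that $1/\Gamma(-p+r)$ vanishes precisely for $0\le r\le p$, and to legitimize dropping those zero terms before the index shift, since otherwise the intermediate series manipulations would involve undefined or singular Pochhammer symbols. Once that truncation is justified, the remaining work is purely mechanical rewriting using the Pochhammer identities above, and the convergence requirement $|z|<1$ is inherited from the resulting $_4F_3$.
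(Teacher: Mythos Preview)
Your proposal is correct and follows essentially the same approach as the paper: rewrite the left-hand side as $\sum_{r\ge 0}\frac{(A)_r(B)_r(C)_r(D)_r z^r}{(E)_r(G)_r\,\Gamma(-p+r)\,r!}$, observe that the terms with $0\le r\le p$ vanish, shift $r\mapsto r+p+1$, and simplify via the Pochhammer identities. Your write-up is in fact more explicit than the paper's about the mechanics of the index shift and the Pochhammer splitting $(X)_{r+p+1}=(X)_{p+1}(X+p+1)_r$, but the underlying argument is identical.
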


\noindent
Appell's function of second kind \cite[p.53, Eq.(5)]{SriMano} is defined as:
\begin{equation*}
F_2\left[\begin{array}{ll}
a;~b,c;~d,g;~x,y\end{array}\right]=F^{1:1;1}_{0:1;1}\left[\begin{array}{ll}
~a:~b;~c;\\
& x,y\\
-:~d;~g;\\\end{array}\right]~~~~~~~~~~~~~~~~~~~~~~~~~~~~~~~~~~~~
\end{equation*}
\begin{equation}\label{E12.19}
=\sum_{m,n=0}^{\infty}\frac{(a)_{m+n}(b)_m(c)_n~x^m~y^n}{(d)_m (g)_n~ m! ~n!}=\sum_{n=0}^{\infty}\frac{(a)_{n}(c)_n~y^n}{(g)_n ~n!}~{}_2F_1\left[\begin{array}{ll}
a+n,~b;\\
& x\\
~~~~~~~~~d;\end{array}\right].
\end{equation}
Convergence conditions of Appell's double series $F_2$:
\begin{enumerate}
\vsize.1cm
\item[{(i)}] Appell's function $F_2$ is convergent when $|x|+|y|<1;~a,b,c,d,g\in\mathbb{C}\backslash\mathbb{Z}^-_0$.

\item[{(ii)}] Appell's function $F_2$ is absolutely convergent when $|x|+|y|=1;\\~x\neq 0,~y\neq 0;~a,b,c,d,g\in\mathbb{C}\backslash\mathbb{Z}^-_0$ and $\mathfrak{R}(a+b+c-d-g)<0$.

\item[{(iii)}] When $a$ is a negative integer, then Appell's series $F_2$ will be a polynomial, $b,c,d,g\in\mathbb{C}\backslash\mathbb{Z}^-_0$.

\item[{(iv)}] When $b,c$ are negative integers, then Appell's series $F_2$ will be a polynomial, $a,d,g\in\mathbb{C}\backslash\mathbb{Z}^-_0$.
\end{enumerate}
For absolutely and conditionally convergence of Appell's double series $F_2 $, we refer a beautiful paper of H\`{a}i {\em et al.}\cite{Haisrimar92115}.\\

\noindent
Mellin-Barnes type contour integral representation of binomial function$~{}_1F_0(z)$:
\begin{equation}\label{D12.100}
(1-z)^{-a}=~{}_1F_0\left[\begin{array}{ll}
~a;\\
&z\\
-;\end{array}\right]=\frac{1}{2\pi i~\Gamma(a)}\int_{-i\infty}^{+i\infty}\Gamma(a+s)\Gamma(-s)(-z)^s~ds:~~z\neq 0,
\end{equation}
where $|\arg(-z)|<\pi, %|z|<1,
a\in\mathbb{C}\backslash\mathbb{Z}^-_0$ and $i=\sqrt{(-1)}$.\\

\noindent
Mellin-Barnes type contour integral representation of $~{}_pF_q(z)$ \cite[p.43, Eq.(6)]{SriMano}:
\begin{equation}\label{D12.101}
~{}_pF_q\left[\begin{array}{ll}
\alpha_1,\alpha_2,...,\alpha_p;\\
& z\\
\beta_1,\beta_2,...,\beta_q;\end{array}\right]=\frac{\Gamma(\beta_1)\Gamma(\beta_2)...\Gamma(\beta_q)}{\Gamma(\alpha_1)\Gamma(\alpha_2)...\Gamma(\alpha_p)}.\frac{1}{2\pi i}\int_{\sigma-i\infty}^{\sigma+i\infty}\frac{\Gamma(\alpha_1+\xi)...\Gamma(\alpha_p+\xi)\Gamma(-\xi)(-z)^\xi}{\Gamma(\beta_1+\xi)...\Gamma(\beta_q+\xi)}~d\xi,
\end{equation}
where $z\neq 0$.\\
\noindent
 Convergence conditions:\\
If $p=q+1$, then $|\arg(-z)|<\pi$.\\
 %or $|z|<1$,\\
If $p=q$, then $|\arg(-z)|<\frac{\pi}{2}$,\\
% or $\Re e(z)<0$ \\
and no $\alpha_i~(i=1,2,...,p)$ %nor $\beta_j$ 
is zero or negative integer but some of $\beta_j~(j=1,2,...,q)$ may be zero or negative integers.\\

\noindent
Mellin-Barnes type contour integral representation of Meijer's G-function (\cite[p.45, Eq.(1)]{SriMano}, see also \cite{Erd1,Luke169}):\\
When $p\leq q$ and $1\leq m \leq q,~0\leq n\leq p,$ then
\begin{equation*}
G^{m,n}_{p,q}\left(z\left|   \begin{array}{ll}
\alpha_1,\alpha_2,\alpha_3,...,\alpha_n;\alpha_{n+1},...,\alpha_p\\
\beta_1,\beta_2,\beta_3,...,\beta_m;\beta_{m+1},...,\beta_q  \end{array}\right. \right)=~~~~~~~~~~~~~~~~~~~~~~~~~~~~~~~~~~~
\end{equation*}
\begin{equation}\label{E12.12}
=\frac{1}{2\pi i}\int_{-i\infty}^{+i\infty}\frac{\Gamma(\beta_1-s)...\Gamma(\beta_m-s)\Gamma(1-\alpha_1+s)...\Gamma(1-\alpha_n+s)}{\Gamma(1-\beta_{m+1}+s)...\Gamma(1-\beta_q+s)\Gamma(\alpha_{n+1}-s)...\Gamma(\alpha_p-s)}(z)^s~ds,
\end{equation}
where $z\neq 0,(\alpha_i-\beta_j)\neq$ positive integers, $i=1,2,3,...,n;~j=1,2,3,...,m$. For details of contours, see\cite[p.207, \cite{Luke169}, p.144]{Erd1}.\\

\noindent
Relations between Meijer's $G$- function and ${}_2F_1(z)$ \cite[p.61, \cite{Wikipedia}, p.77, Eq.(1)]{Mathaisaxena}: 
\begin{equation*}
G^{2~2}_{2~2}\left(z\left|   \begin{array}{ll}
1-a,1-b;-\\
0,c-a-b;-  \end{array}\right. \right)=\frac{\Gamma(a)\Gamma(b)\Gamma(c-a)\Gamma(c-b)}{\Gamma(c)}{}_2F_1\left[\begin{array}{ll}
a,~b;\\
& 1-z\\
~~~ ~c;\end{array}\right],
\end{equation*}
where $|1-z|<1$ and $c-a,c-b\neq 0,-1,-2,...$
\begin{equation*}
G^{2~2}_{2~2}\left(z\left|   \begin{array}{ll}
a_1,a_2;-\\
b_1,b_2;-  \end{array}\right. \right)=\frac{\Gamma(1-a_1+b_1)\Gamma(1-a_1+b_2)\Gamma(1-a_2+b_1)\Gamma(1-a_2+b_2)z^{b_1}}{\Gamma(2-a_1-a_2+b_1+b_2)}\times
\end{equation*}
\begin{equation}\label{F12.200}
\times{}_2F_1\left[\begin{array}{ll}
1-a_1+b_1,1-a_2+b_1;\\
& 1-z\\
~~~2-a_1-a_2+b_1+b_2;\end{array}\right];~~~|1-z|<1.
\end{equation}

\noindent
A unification of Lauricella's fourteen triple hypergeometric functions $F_1,F_2,...,F_{14}$\cite{Lauricella93158}, including ten functions in Saran's notations  $F_E,F_F,F_G,F_K,F_M,F_N,F_P,F_R,F_S$ and $F_T$\cite{Saran1954,Saran1956} and the three additional triple hypergeometric functions $H_A,H_B,H_C$\cite{Srivastava1964,Srivas1967} was introduced by Srivastava \cite{Srivastava1967} who defined a general triple hypergeometric series $F^{(3)}[x,y,z]$ (\cite[p.428]{Srivastava1967}, see also \cite[p.156, \cite{Deshpande1981}, p.40]{ChhabRusia1980}):
\begin{equation*}
F^{(3)}[x,y,z]=F^{(3)}\left[\begin{array}{ll}
(a_A)\text{\bf::}~(b_B);(b'_{B'});~(b''_{B''})\text{\bf:}~(c_C);(c'_{C'});(c''_{C''})\text{\bf;}\\
&x,~y,~z\\
(e_E)\text{\bf::}~(g_G);(g'_{G'});(g''_{G''})\text{\bf:}(h_H);(h'_{H'});(h''_{H''})\text{\bf;}\end{array}\right]
\end{equation*}
\begin{equation}\label{E12.20}
=\sum_{m,n,p=0}^{\infty}\Lambda(m,n,p)~\frac{x^m}{m!}~\frac{y^n}{n!}~\frac{z^p}{p!},
\end{equation}
where, for convenience,
\begin{equation*}
\Lambda(m,n,p)=\frac{\prod_{j=1}^{A}(a_j)_{m+n+p}~\prod_{j=1}^{B}(b_j)_{m+n}~\prod_{j=1}^{B'}(b'_j)_{n+p}~\prod_{j=1}^{B''}(b''_j)_{m+p}}{\prod_{j=1}^{E}(e_j)_{m+n+p}~\prod_{j=1}^{G}(g_j)_{m+n}~\prod_{j=1}^{G'}(g'_j)_{n+p}~\prod_{j=1}^{G''}(g''_j)_{m+p}}\times
\end{equation*}
\begin{equation}\label{A13.2}
\times\frac{\prod_{j=1}^{C}(c_j)_{m}~\prod_{j=1}^{C'}(c'_j)_{n}~\prod_{j=1}^{C''}(c''_j)_{p}}{\prod_{j=1}^{H}(h_j)_{m}~\prod_{j=1}^{H'}(h'_j)_{n}~\prod_{j=1}^{H''}(h''_j)_{p}}
\end{equation}
where $(a_A)$ abbreviates the array of $A$ parameters $a_1,a_2,...,a_A$, with similar interpretations for $(b_B),(b'_{B'}),(b''_{B''}),  et~cetera$. The triple hypergeometric series in (\ref{E12.20}) converges when 

$$
\left\{\begin{array}{llrrr}
1+E+G+G''+H-A-B-B''-C\geq 0,\\
1+E+G+G'+H'-A-B-B'-C'\geq 0,\\
1+E+G'+G''+H''-A-B'-B''-C''\geq 0,\\
\end{array}\right.\\$$
where the equalities hold true for suitably constrained values of $|x|,|y|$ and $|z|$.\\

\noindent
Suppose $\phi(x,y)=0$ is the projection of the curved surface of three dimensional figure $z=f(x,y)$ over the $x$-$y$ plane, then curved surface area is given by
\begin{equation}\label{a12.9}
\hat{S}=\underbrace{\int\int}_{\stackrel{\text{over the area }} {\phi(x,y)=0}}\sqrt{\left\lbrace 1+\left( \frac{\partial z}{\partial x}\right) ^2+\left( \frac{\partial z}{\partial y}\right) ^2\right\rbrace }~dx~dy.
\end{equation}
\begin{figure}[ht!]
\noindent\begin{minipage}{0.45\textwidth}% adapt widths of minipages to your needs
\includegraphics[width=\linewidth]{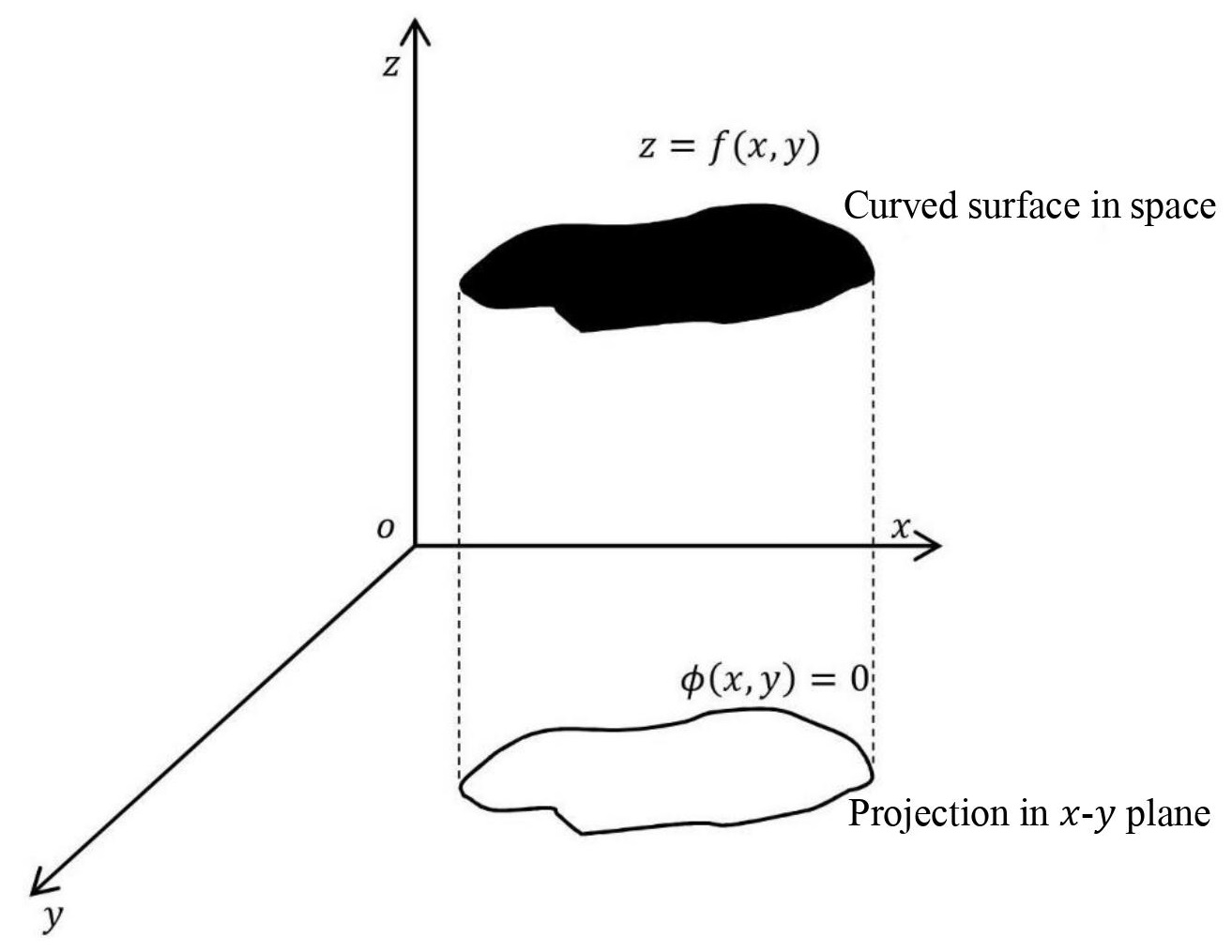}
\end{minipage}%
\caption[short]{Projection of curved surface in $x$-$y$ plane.}
\label{fig:Projection }
\end{figure}

\begin{equation}\label{D12.1000}
\text{A definite integral }\int_{\theta=0}^{\frac{\pi}{2}}\sin^\alpha {\theta}\cos^\beta {\theta}~d\theta=\frac{\Gamma\left(\frac{\alpha+1}{2}\right) \Gamma\left(\frac{\beta+1}{2}\right) }{2\Gamma\left(\frac{\alpha+\beta+2}{2}\right) };~\mathfrak{R}(\alpha)>-1,~\mathfrak{R}(\beta)>-1.
\end{equation}

Special functions arise in the solution of various classical problems of physics, generally involving the flow of electromagnetic, acoustic or thermal energy. In the study of propagation of heat in a metallic bar, one could consider a bar with a rectangular cross-section, a round cross-section, an elliptic cross-section or even more complicated cross-sections. In these situations while dealing with this kind of problem, leads to ordinary differential equations whose solution forms the majority of special functions of mathematical physics. In order to determine the allowable frequencies of oscillation of the waves on the surface of fluid put in a circular dish, hypergeometric series are required to terminate and determine the allowable frequencies. The hypergeometric functions have been turned out to have a variety of applications in a wide range of research subjects (for example see \cite{Akram2022,Akkurt2021}). In this paper we derive the closed form for obtaining the exact curved surface area of a frustum of hemiellipsoid by using Mellin-Barnes type contour integral representations of generalized hypergeometric function$~_pF_q(z)$ in terms of Appell's function of second kind and general triple hypergeometric series of Srivastava in Section \ref{T20.3} with the aid of some important definite integrals $\int_{\theta=-\pi}^{\pi}\left(\frac{\cos^2{\theta}}{\beta^2}+\frac{\sin^2{\theta}}{\lambda^2}\right)^sd\theta$ and $\int_{r=\beta}^{\gamma} \frac{r^{2s+1}}{(1-r^2)^s}dr$ given in  Section \ref{T20.2}. The problem of approximating the surface area of
a triaxial hemiellipsoid does not appear to have been addressed i.e, no closed form expression exists for the surface area of a hemiellipsoid. This situation stems from the fact that it is impossible to execute the integration in the expression for the
surface area in closed form for the most general case of three unequal axes. Some numerical examples have also been demonstrated in 
Section \ref{HE1} among numerous ones.
\section{Evaluation of some useful definite integrals
}\label{T20.2}
\noindent
The following definite integrals hold true associated with suitable convergence conditions:
\begin{theorem}\label{TH1}
\begin{equation}\label{G12.10}
\int_{\theta=-\pi}^{\pi}\left(\frac{\cos^2{\theta}}{\sigma^2}+\frac{\sin^2{\theta}}{\lambda^2}\right)^s~d\theta =\frac{2\pi \lambda}{\sigma^{1+2s}}~{}_2F_1\left[\begin{array}{ll}
\frac{1}{2},1+s;\\
&1-\frac{\lambda^2}{\sigma^2}\\
~~~~~~~~1;\end{array}\right],~~~~~~~~~~~~~~~
\end{equation}
where $\sigma\geq \lambda>0$ and it is obvious that $0\leq(1-\frac{\lambda^2}{\sigma^2})<1$.\\
\end{theorem}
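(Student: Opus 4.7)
The plan is to reduce the integral to a Gauss hypergeometric series by a binomial expansion and term-by-term integration, then bring it into the stated form via Euler's transformation. First, I would factor out $\lambda^{-2}$ from the bracket, writing
\[
\frac{\cos^2\theta}{\sigma^2}+\frac{\sin^2\theta}{\lambda^2}=\frac{1}{\lambda^2}\bigl[1-w\cos^2\theta\bigr],\qquad w:=1-\frac{\lambda^2}{\sigma^2}\in[0,1),
\]
so that the integrand becomes $\lambda^{-2s}(1-w\cos^2\theta)^s$. The hypothesis $\sigma\ge\lambda>0$ is exactly what makes $w\in[0,1)$, so this is the ``correct'' normalization (as opposed to pulling out $\sigma^{-2}$).

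Next, I would expand $(1-w\cos^2\theta)^s$ using the ${}_1F_0$/binomial series, namely $(1-z)^{-a}=\sum_{n\ge 0}(a)_n z^n/n!$ with $a=-s$ and $z=w\cos^2\theta$. Since $|w\cos^2\theta|\le w<1$, the series converges uniformly in $\theta$, so interchanging summation and integration is justified. The surviving $\theta$-integrals are the classical even cosine moments, which follow at once from the beta-type formula (\ref{D12.1000}) applied with $\alpha=0,\beta=2n$:
\[
\int_{-\pi}^{\pi}\cos^{2n}\theta\,d\theta=4\int_{0}^{\pi/2}\cos^{2n}\theta\,d\theta=\frac{2\sqrt{\pi}\,\Gamma\!\left(n+\tfrac12\right)}{n!}=\frac{2\pi\,(1/2)_n}{n!}.
\]
Collecting terms and recognizing the Pochhammer products as a ${}_2F_1$ gives the intermediate identity
\[
\int_{-\pi}^{\pi}\left(\frac{\cos^2\theta}{\sigma^2}+\frac{\sin^2\theta}{\lambda^2}\right)^s d\theta=\frac{2\pi}{\lambda^{2s}}\,{}_2F_1\!\left[-s,\tfrac12;\,1;\,w\right].
\]

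Finally, I would convert this to the claimed form using Euler's transformation ${}_2F_1[a,b;c;z]=(1-z)^{c-a-b}{}_2F_1[c-a,c-b;c;z]$ with $(a,b,c)=(-s,1/2,1)$. Here $c-a-b=s+\tfrac12$ and $1-w=\lambda^2/\sigma^2$, so the prefactor $(1-w)^{s+1/2}=\lambda^{2s+1}/\sigma^{2s+1}$ combines with the $\lambda^{-2s}$ already present to produce exactly $2\pi\lambda/\sigma^{1+2s}$, while the transformed parameters become $(\tfrac12,\,1+s;\,1)$, which is precisely the right-hand side of (\ref{G12.10}).

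The analytic content is modest: all that really needs justification is the uniform convergence of the binomial series, which is immediate from $w<1$; the evaluation of the cosine moment is a direct application of (\ref{D12.1000}). The main conceptual point — and the only genuine step of ``insight'' — is recognizing that the hypergeometric function produced naturally by the binomial expansion carries parameters $(-s,\tfrac12;\,1)$, and that Euler's transformation is exactly what repackages these into the target parameters $(\tfrac12,\,1+s;\,1)$ while simultaneously absorbing the extra $(\lambda/\sigma)^{2s+1}$ into the prefactor.
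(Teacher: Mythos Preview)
Your proof is correct and takes a genuinely different—and more elementary—route than the paper's. The paper factors the bracket as $\lambda^{-2s}\sin^{2s}\theta\,(1+\lambda^2\cos^2\theta/(\sigma^2\sin^2\theta))^s$; because the argument $\lambda^2\cos^2\theta/(\sigma^2\sin^2\theta)$ is unbounded near $\theta=0$, the binomial series cannot be used directly, and the paper instead passes to the Mellin--Barnes representation~(\ref{D12.100}), interchanges the $\theta$- and contour integrals, applies~(\ref{D12.1000}), recognizes the outcome as a Meijer $G^{2,2}_{2,2}$, and finally converts via~(\ref{F12.200}) to a ${}_2F_1$. Your key simplification is the choice of normalization: writing the integrand as $\lambda^{-2s}(1-w\cos^2\theta)^s$ with $0\le w<1$ makes the binomial series uniformly convergent in $\theta$, so the entire Mellin--Barnes/Meijer-$G$ apparatus becomes unnecessary. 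Both routes in fact land on the same intermediate expression $\tfrac{2\pi}{\lambda^{2s}}\,{}_2F_1[-s,\tfrac12;1;w]$—the paper's ``after further simplification'' is precisely your Euler step. Your argument is self-contained and lighter; the paper's approach, while circuitous for this particular lemma, is consistent with the Mellin--Barnes methodology it employs throughout.
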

\begin{theorem}\label{TH2}
\begin{equation}\label{G12.10000}
\int_{\theta=-\pi}^{\pi}\left(\frac{\cos^2{\theta}}{\sigma^2}+\frac{\sin^2{\theta}}{\lambda^2}\right)^s~d\theta =\frac{2\pi \sigma}{\lambda^{1+2s}}~{}_2F_1\left[\begin{array}{ll}
\frac{1}{2},1+s;\\
&1-\frac{\sigma^2}{\lambda^2}\\
~~~~~~~~1;\end{array}\right],~~~~~~~~~~~~~~~
\end{equation}
where $\lambda\geq \sigma>0$ and it is obvious that $0\leq(1-\frac{\sigma^2}{\lambda^2})<1$.\\
\end{theorem}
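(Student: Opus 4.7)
The plan is to reduce Theorem \ref{TH2} to Theorem \ref{TH1} by a change of variables that swaps the roles of $\cos$ and $\sin$. The integrand $f(\theta) := (\cos^2\theta/\sigma^2 + \sin^2\theta/\lambda^2)^s$ has period $\pi$ and hence is $2\pi$-periodic, so
\begin{equation*}
\int_{-\pi}^{\pi} f(\theta)\,d\theta \;=\; \int_{-\pi/2}^{3\pi/2} f(\theta)\,d\theta.
\end{equation*}
Setting $\phi = \theta - \pi/2$ and using $\cos^2(\phi+\pi/2) = \sin^2\phi$ and $\sin^2(\phi+\pi/2)=\cos^2\phi$, the right hand side transforms to
\begin{equation*}
\int_{-\pi}^{\pi}\left(\frac{\cos^2\phi}{\lambda^2} + \frac{\sin^2\phi}{\sigma^2}\right)^{s}\! d\phi,
\end{equation*}
which is exactly the integrand of Theorem \ref{TH1} with $\sigma$ and $\lambda$ interchanged. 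Under this swap, the constraint $\sigma\geq\lambda$ of Theorem \ref{TH1} becomes $\lambda\geq\sigma$, which is precisely the hypothesis of Theorem \ref{TH2}. Applying (\ref{G12.10}) to the transformed integral then yields the right hand side of (\ref{G12.10000}).

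Should a self-contained derivation be preferred, I would instead factor the integrand as $(\cos^2\theta/\sigma^2 + \sin^2\theta/\lambda^2)^s = \sigma^{-2s}(1 - k\sin^2\theta)^s$ with $k := 1 - \sigma^2/\lambda^2 \in [0,1)$, expand by the binomial series $(1-x)^s = \sum_{n\geq 0}(-s)_n x^n/n!$, and integrate term by term using the special case $\alpha = 2n$, $\beta = 0$ of (\ref{D12.1000}), which gives $\int_{-\pi}^{\pi}\sin^{2n}\theta\,d\theta = 2\pi (1/2)_n/n!$. The resulting series is $(2\pi/\sigma^{2s})\,{}_2F_1(1/2,-s;1;k)$. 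A single application of Euler's transformation $\,{}_2F_1(a,b;c;z) = (1-z)^{c-a-b}\,{}_2F_1(c-a,c-b;c;z)$ at $(a,b,c) = (1/2,-s,1)$ contributes a factor $(1-k)^{s+1/2} = (\sigma/\lambda)^{1+2s}$, which combines with the $\sigma^{-2s}$ prefactor to yield $2\pi\sigma/\lambda^{1+2s}$ and shifts the upper parameter from $-s$ to $1+s$, matching (\ref{G12.10000}) exactly.

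The only mild subtlety in the symmetry route is the justification of the $\pi/2$ shift via $2\pi$-periodicity; in the direct route, the only non-mechanical step is the Euler transformation that converts $\,{}_2F_1(1/2,-s;1;k)$ into the required form while absorbing the correct power of $\sigma/\lambda$ into the prefactor. Neither qualifies as a serious obstacle, and I would favour the symmetry argument for its brevity.
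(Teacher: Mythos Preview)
Your proposal is correct. Both routes you outline are valid: the $\pi/2$-shift argument reduces (\ref{G12.10000}) to (\ref{G12.10}) by the evident $\sigma\leftrightarrow\lambda$ symmetry of the integrand, and your direct binomial expansion followed by Euler's transformation is checked line by line without issue.

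The paper, however, does not argue this way. Its proof of Theorem~\ref{TH2} is simply ``the same steps as in the proof of Theorem~\ref{TH1}'', i.e.\ one is meant to rerun the Mellin--Barnes machinery: write the integrand as a ${}_1F_0$, insert the contour representation (\ref{D12.100}), swap the order of integration, evaluate the $\theta$-integral via (\ref{D12.1000}), recognise the result as a Meijer $G^{2,2}_{2,2}$, and finally convert to ${}_2F_1$ through (\ref{F12.200}). Your symmetry reduction sidesteps all of this by observing that the two theorems are literally the same identity with the labels of $\sigma$ and $\lambda$ exchanged; your alternative direct computation replaces the contour-integral/Meijer-$G$ detour with a termwise Wallis integral and a single Euler transformation. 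Both of your approaches are shorter and more elementary than the paper's, at the cost of not illustrating the Mellin--Barnes technique that the paper is showcasing. The symmetry argument in particular is the natural proof here and is preferable for brevity.
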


\begin{proof}
For the independent demonstration of the Theorem \ref{TH1}
\begin{equation*}
\text{suppose} ~I_1=\int_{\theta=-\pi}^{\pi}\left(\frac{\cos^2{\theta}}{\sigma^2}+\frac{\sin^2{\theta}}{\lambda^2}\right)^s~d\theta.
\end{equation*}

\begin{equation*}
=\frac{4}{\lambda^{2s}}\int_{\theta=0}^{\frac{\pi}{2}}\left( \sin^2 \theta\right)^s\left\lbrace 1+\frac{\lambda^2\cos^2{\theta}}{\sigma^2\sin^2{\theta}}\right\rbrace ^s~d\theta 
\end{equation*}
\begin{equation}\label{G12.100}
=\frac{4}{\lambda^{2s}}\int_{\theta=0}^{\frac{\pi}{2}} \sin^{2s} \theta~{}_1F_0\left[\begin{array}{ll}
~~-s;\\
&\frac{-\lambda^2\cos^2{\theta}}{\sigma^2\sin^2{\theta}}\\
~-~~;\end{array}\right]~d\theta. 
\end{equation}
Employing the contour integral (\ref{D12.100}) of ${}_{1}F_{0}(.)$, we get

\begin{equation}\label{G12.1005}
I_1=\frac{2}{\pi i \Gamma(-s)~\lambda^{2s}}\int_{\theta=0}^{\frac{\pi}{2}} \sin^{2s}\theta\left\lbrace \int_{\zeta=-i\infty}^{+i\infty}\Gamma(-\zeta)\Gamma(-s+\zeta)\left(\frac{\lambda^2\cos^2{\theta}}{\sigma^2\sin^2{\theta}}\right)^\zeta~d\zeta \right\rbrace  ~d\theta.
\end{equation}
Upon interchanging the order of integration in double integral of (\ref{G12.1005}), and using the formula (\ref{D12.1000}), we get

\begin{equation}
I_1=\frac{1}{\pi i \Gamma(-s)\Gamma(1+s)~\lambda^{2s}}\int_{\zeta=-i\infty}^{+i\infty}\Gamma(0-\zeta)\Gamma(1-(s+1)+\zeta)\Gamma\left(\frac{1}{2}+s-\zeta\right)\Gamma\left(1-\frac{1}{2}+\zeta\right)\left(\frac{\lambda^2}{\sigma^2}\right)^\zeta~d\zeta.
\end{equation}
Applying Meijer's $G$-function (\ref{E12.12}), we get
\begin{equation}\label{G12.101}
I_1=\frac{2}{\Gamma(-s)\Gamma(1+s)~\lambda^{2s} }~G^{2~2}_{2~2}\left(\frac{\lambda^2}{\sigma^2}\left|   \begin{array}{ll}
s+1,\frac{1}{2};-\\
0,\frac{1}{2}+s;-  \end{array}\right. \right).~~~~~~~~~~~~~~~~~~~~~~~~~~~~~~~~~~~~
\end{equation}
Employing the conversion formula (\ref{F12.200}) in equation (\ref{G12.101}), and after further simplification, we arrive at the result asserted in Theorem \ref{TH1}.\\

\noindent
The proof of the Theorem \ref{TH2} follows the same steps as in the proof of the Theorem \ref{TH1}. So we omit the details here.
\end{proof}
\begin{theorem}\label{TH3}
\begin{equation}\label{p12.1000}
\int_{r=\beta}^{\gamma} \frac{r^{2s+1}}{(1-r^2)^s}~~dr=\sum_{\gamma\longrightarrow\beta}^{\circledast}\left\lbrace \frac{\gamma^{2+2s}} {2(1+s)}~{}_2F_1\left[\begin{array}{ll}
s,1+s;\\
& \gamma^2\\
~~~2+s;\end{array}\right]\right\rbrace,~~~~~~~~~~~~~~~~~~~~~
\end{equation}
where $~0<\beta<\gamma<1$. 
\end{theorem}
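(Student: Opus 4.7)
The plan is to expand the factor $(1-r^2)^{-s}$ as a binomial series, integrate term by term, and then reorganize the resulting Pochhammer symbols into the ${}_2F_1$ shape claimed in the statement. The symbol $\sum_{\gamma\to\beta}^{\circledast}$ is read as ``evaluate the bracketed antiderivative at the upper limit $\gamma$ and subtract its value at the lower limit $\beta$,'' so it suffices to exhibit an antiderivative of the form $\tfrac{r^{2+2s}}{2(1+s)}\,{}_2F_1[s,1+s;2+s;r^2]$.

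First I would write
\[
\frac{1}{(1-r^2)^s} = {}_1F_0\!\left[\begin{array}{l} s;\\ -;\end{array} r^2\right] = \sum_{k=0}^{\infty}\frac{(s)_k}{k!}\,r^{2k},
\]
which is absolutely and uniformly convergent on $[\beta,\gamma]\subset(0,1)$ since $\gamma^2<1$. Multiplying by $r^{2s+1}$ and interchanging sum and integral (justified by the uniform convergence on the compact subinterval) yields, for an arbitrary $r\in(0,1)$,
\[
\int_{0}^{r} \frac{t^{2s+1}}{(1-t^2)^s}\,dt = \sum_{k=0}^{\infty}\frac{(s)_k}{k!}\int_{0}^{r} t^{2s+2k+1}\,dt = \sum_{k=0}^{\infty}\frac{(s)_k}{k!}\,\frac{r^{2s+2k+2}}{2(s+k+1)}.
\]

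Next I would factor out $\frac{r^{2+2s}}{2(1+s)}$ and rewrite the remaining quotient using the identity $\frac{s+1}{s+k+1}=\frac{(s+1)_k}{(s+2)_k}$, which follows from the telescoping $(s+1)_k=(s+1)(s+2)\cdots(s+k)$ and $(s+2)_k=(s+2)(s+3)\cdots(s+k+1)$. This recasts the antiderivative as
\[
\frac{r^{2+2s}}{2(1+s)}\sum_{k=0}^{\infty}\frac{(s)_k\,(s+1)_k}{(s+2)_k\,k!}\,(r^2)^k = \frac{r^{2+2s}}{2(1+s)}\,{}_2F_1\!\left[\begin{array}{l} s,\,1+s;\\ \;\;2+s;\end{array} r^2\right].
\]
Finally, applying this antiderivative at $r=\gamma$ and subtracting its value at $r=\beta$ produces precisely the right-hand side of (\ref{p12.1000}) under the compact notation $\sum_{\gamma\to\beta}^{\circledast}$.

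The routine parts are the term-by-term integration and the Pochhammer rearrangement; I expect the only genuine care to be needed in (i) confirming that the binomial expansion is legitimate on the whole interval $[\beta,\gamma]$ (handled by the hypothesis $0<\beta<\gamma<1$, so that $\sup_{[\beta,\gamma]} r^2 = \gamma^2 <1$) and (ii) verifying that the rearranged ratio really equals the ${}_2F_1$ coefficient, i.e.\ that the apparent numerator factor $s+1$ combines cleanly with $s+k+1$ to give $(s+1)_k/(s+2)_k$. Neither step presents a real obstacle, and the convergence of the resulting ${}_2F_1(\gamma^2)$ with $\gamma^2<1$ is automatic.
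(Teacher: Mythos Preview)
Your proposal is correct and follows essentially the same route as the paper: both expand $(1-r^2)^{-s}$ as the binomial series ${}_1F_0[s;-;r^2]$, integrate term by term, and then repackage the coefficients via $\tfrac{1+s}{1+s+k}=\tfrac{(1+s)_k}{(2+s)_k}$ into the ${}_2F_1$ form. The only cosmetic difference is that you first produce an antiderivative by integrating from $0$ to $r$ and then evaluate at the endpoints, whereas the paper integrates directly over $[\beta,\gamma]$; the content is identical.
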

\begin{proof}
When $0<\beta<r<\gamma<1$, then
\begin{equation*}
\int_{r=\beta}^{r=\gamma} \frac{r^{2s+1}}{(1-r^2)^s}~~dr=\int_{\beta}^{\gamma}r^{2s+1}{(1-r^2)^{-s}}~~dr~~~~~~~~~~~~~~~~~~~~~~~~~~~~~~~~~~~~~~~~~~~`
\end{equation*}

\begin{equation}
=\int_{\beta}^{\gamma} r^{2s+1}~{}_1F_0\left[\begin{array}{ll}
~s~;\\
& r^2\\
~-;\end{array}\right]~dr~~~~~~~~~~~~
\end{equation}

\begin{equation}
=\sum_{p=0}^{\infty}\frac{(s)_p~}{p!} \int_{\beta}^{\gamma}~r^{2p+2s+1}~dr ~~~~~~~~~~~~~~~~~
\end{equation}

\begin{equation}
=\frac{1}{2(1+s)}~\sum_{p=0}^{\infty}\frac{(s)_p~(1+s)_p~}{(2+s)_p ~p!} ~\left( \gamma^{2p+2s+2}-\beta^{2p+2s+2}\right). 
\end{equation}
After further simplification, we arrive at the result as asserted in Theorem \ref{TH3}.\\
\end{proof}
\noindent
Throughout the paper, the semi-axes of the ellipsoid $\frac{x^2}{a^2}+\frac{y^2}{b^2}+\frac{z^2}{c^2}=1$ are assumed in the form of $a>b>c>0$.
\section{Closed form for the curved surface area of a frustum of hemiellipsoid }\label{T20.3}

\noindent
For the sake of convenience, we shall use the following notation to represent the lengthy mathematical expression
\begin{equation}
\sum_{\gamma\longrightarrow\beta}^{\circledast}\left\lbrace\Phi(\gamma) \right\rbrace =\left\lbrace \Phi(\gamma)\right\rbrace -\left\lbrace\Phi(\beta)\right\rbrace .~~~~~~~~~~~~~~~~~~~~~~~~~~~~~~~~~~~~~~
\end{equation}
\noindent
\begin{theorem}\label{TH4}
 The curved surface area of a frustum of hemiellipsoid (whose axis is positive direction of $z$-axis) i.e, $z=c\left( 1-\frac{x^2}{a^2}-\frac{y^2}{b^2}\right)^{\frac{1}{2}};a>b>c>0$, lying above $x$-$y$ plane (i.e, $z=0$) and bounded by the two parallel planes $z=h$ and $z=H~$ is given by:
\begin{equation*}
\hat{S}~=\sum_{\gamma\longrightarrow\beta}^{\circledast}\left\lbrace b^2\gamma^2 \pi~F_2\left[1;~\frac{1}{2},\frac{-1}{2}~;~1,2~;1-\frac{b^2}{a^2},\frac{-c^2\gamma^2}{a^2} \right]+ \right.~~~~~~~~~~~~~~~~~~~~~~~~
\end{equation*}
\begin{equation}\label{H12.10}
\left.+ \frac{b^2c^2\gamma^6 \pi}{6a^2}~F^{(3)}\left[\begin{array}{ll}
-~\text{\bf {:}}\text{\bf {:}}~-;2,3;~2~\text{\bf {:}}~\frac{1}{2}~;~1~;~~~\frac{1}{2}~\text{\bf;}\\
& 1-\frac{b^2}{a^2},\gamma^2,\frac{-c^2\gamma^2}{a^2}\\
-~\text{\bf {:}}\text{\bf {:}}~-;~~4~;-~\text{\bf {:}}~1~;~2~;~2,2~\text{\bf;}\\
\end{array}\right] \right\rbrace ,  
\end{equation}
where $0<h<H<c;~\beta^2={\left(1- \frac{H^2}{c^2}\right) };~\gamma^2={\left(1- \frac{h^2}{c^2}\right) };~0<\beta<\gamma<1;~|1-\frac{b^2}{a^2}|<1;~a>b>c>0;~|\frac{-c^2\beta^2}{a^2}|<1$ and $|\frac{-c^2\gamma^2}{a^2}|<1$.\\
\end{theorem}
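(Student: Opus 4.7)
My plan is to evaluate the surface integral (\ref{a12.9}) applied to the upper hemiellipsoid $z=c(1-x^{2}/a^{2}-y^{2}/b^{2})^{1/2}$ and to reduce everything to the two closed evaluations already established in Theorems~\ref{TH1} and \ref{TH3}. The planes $z=h$ and $z=H$ cut the hemiellipsoid so that its projection on the $x$-$y$ plane is the elliptic annulus $\beta^{2}\le x^{2}/a^{2}+y^{2}/b^{2}\le \gamma^{2}$, with $\gamma^{2}=1-h^{2}/c^{2}$ and $\beta^{2}=1-H^{2}/c^{2}$. The elliptic--polar substitution $x=ar\cos\theta$, $y=br\sin\theta$ (Jacobian $abr$) converts the surface element into
\begin{equation*}
\hat{S}=ab\int_{\beta}^{\gamma}\!\!\int_{-\pi}^{\pi} r\sqrt{1+\frac{c^{2}r^{2}}{1-r^{2}}\left(\frac{\cos^{2}\theta}{a^{2}}+\frac{\sin^{2}\theta}{b^{2}}\right)}\,d\theta\,dr.
\end{equation*}
I would expand the square root through the binomial/${}_{1}F_{0}$ series (equivalently via its Mellin--Barnes form (\ref{D12.100})) and interchange summation with integration; the $\theta$- and $r$-integrations then decouple, and Theorem~\ref{TH1} (with $\sigma=a$, $\lambda=b$, $s=n$) and Theorem~\ref{TH3} (with $s=n$) evaluate them in closed form.

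This manipulation produces
\begin{equation*}
\hat{S}=\pi b^{2}\sum_{\gamma\longrightarrow\beta}^{\circledast}\gamma^{2}\sum_{n=0}^{\infty}\frac{(-1/2)_{n}}{(n+1)!}\left(-\frac{c^{2}\gamma^{2}}{a^{2}}\right)^{n}\!{}_{2}F_{1}\!\left[\tfrac12,1+n;1;1-\tfrac{b^{2}}{a^{2}}\right]{}_{2}F_{1}\!\left[n,1+n;2+n;\gamma^{2}\right].
\end{equation*}
The decisive step is to split the inner ${}_{2}F_{1}[n,1+n;2+n;\gamma^{2}]$ into its $p=0$ term (equal to $1$) and its $p\ge 1$ tail. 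The $p=0$ contribution, after expanding the other ${}_{2}F_{1}$ in powers of $k:=1-b^{2}/a^{2}$, reassembles exactly via the second representation in (\ref{E12.19}) as $b^{2}\gamma^{2}\pi\,F_{2}[1;\tfrac12,-\tfrac12;1,2;\,1-b^{2}/a^{2},-c^{2}\gamma^{2}/a^{2}]$, which is the first summand in (\ref{H12.10}).

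For the $p\ge 1$ tail only $n\ge 1$ survives, since $(n)_{p}$ vanishes at $n=0$. I would shift $n\mapsto n+1$ and $p\mapsto p+1$ and use the Pochhammer identities $(-1/2)_{n+1}=-\tfrac12(1/2)_{n}$ and $(a)_{p+1}=a(a+1)_{p}$ to pull out the external factor $\pi b^{2}\gamma^{2}\cdot\bigl(-\tfrac12\bigr)\cdot y\cdot\gamma^{2}=\pi b^{2}c^{2}\gamma^{6}/(2a^{2})$, where $y=-c^{2}\gamma^{2}/a^{2}$. The spurious factors $(n+1)$ and $(n+2)$ produced by the shifted Pochhammer numerators cancel against those in the shifted denominators, and the remaining triple sum in $(k,\gamma^{2},y)$ has $(0,0,0)$ coefficient equal to $1/3$; absorbing this factor into the prefactor gives $\pi b^{2}c^{2}\gamma^{6}/(6a^{2})$, and the resulting series is precisely the $F^{(3)}$ displayed in (\ref{H12.10}), identifiable by matching its Pochhammer pattern against (\ref{E12.20})--(\ref{A13.2}). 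The two pieces add inside the single $\sum_{\gamma\longrightarrow\beta}^{\circledast}\{\cdot\}$ by linearity.

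The hardest part is this final termwise identification with Srivastava's $F^{(3)}$: the parameter array in (\ref{H12.10}) encodes combined Pochhammer factors $(2)_{n+p}$, $(3)_{n+p}$ and $(2)_{m+p}$ in the numerator and $(4)_{n+p}$, $(2)_{n}$ and $(2)_{p}^{2}$ in the denominator, all of which must be reconstructed from the plain factorials and shifted Pochhammer symbols produced by the series manipulation. I would verify the identification termwise at several low orders, e.g.\ $(m,n,p)\in\{(0,0,0),(0,0,1),(0,1,0),(1,0,0)\}$ (where both sides give $1$, $3/8$, $3/4$, $1$ respectively), before committing to the full symbolic match. The convergence conditions stated in the theorem, $|1-b^{2}/a^{2}|<1$, $|c^{2}\gamma^{2}/a^{2}|<1$ and $0<\beta<\gamma<1$, are inherited from those of Theorems~\ref{TH1} and \ref{TH3} together with the convergence criteria for $F_{2}$ and $F^{(3)}$ recalled in Section~\ref{IP}.
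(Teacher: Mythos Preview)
Your route is essentially the paper's: the same elliptic--polar reduction to the double integral (the paper's (3.11)), the same appeal to Theorems~\ref{TH1} and \ref{TH3} to separate the $\theta$- and $r$-integrations, and the same splitting of the ${}_{2}F_{1}[s,1+s;2+s;\gamma^{2}]$ factor into its leading term (producing the $F_{2}$) and its tail (producing the $F^{(3)}$). Where you perform the shifts $n\mapsto n+1$, $p\mapsto p+1$ directly on the triple sum, the paper instead reconverts its contour integrals into a ${}_{2}F_{1}$ and a $\frac{1}{\Gamma(0)}\,{}_{4}F_{3}$ via (\ref{D12.101}) and then invokes identity (\ref{A13.50}); these two devices carry out the same bookkeeping, and your direct index shift is arguably the tidier of the two.

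One point that deserves tightening: the paper deliberately keeps the Mellin--Barnes variable $s$ complex through (3.14)--(3.18), precisely because the \emph{power-series} expansion of $(1+w)^{1/2}$ need not converge over the whole integration domain. Under only the stated hypotheses $0<\beta<\gamma<1$ and $a>b>c$ one has
\[
w=\frac{c^{2}r^{2}}{1-r^{2}}\Bigl(\frac{\cos^{2}\theta}{a^{2}}+\frac{\sin^{2}\theta}{b^{2}}\Bigr)\le \frac{c^{2}\gamma^{2}}{b^{2}(1-\gamma^{2})},
\]
and this bound can exceed $1$ (for instance $a=3$, $b=2$, $c=1$, $\gamma^{2}=0.9$ gives $2.25$), so the termwise interchange you describe with integer $s=n$ is not always justified. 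Your parenthetical ``equivalently via its Mellin--Barnes form (\ref{D12.100})'' is indeed the remedy, but then Theorems~\ref{TH1} and \ref{TH3} must be invoked with a genuinely complex $s$ (as the paper does at (3.16)) before the discrete sum over $n$ is recovered; treating the series and the contour integral as interchangeable from the outset hides exactly the step that the paper's machinery is there to supply.
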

\noindent
{\bf Remark:} The above formula (\ref{H12.10}) is verified numerically through {\em Mathematica program}.\\
\begin{figure}[ht!]
\noindent\begin{minipage}{0.50\textwidth}% adapt widths of minipages to your needs
\includegraphics[width=\linewidth]{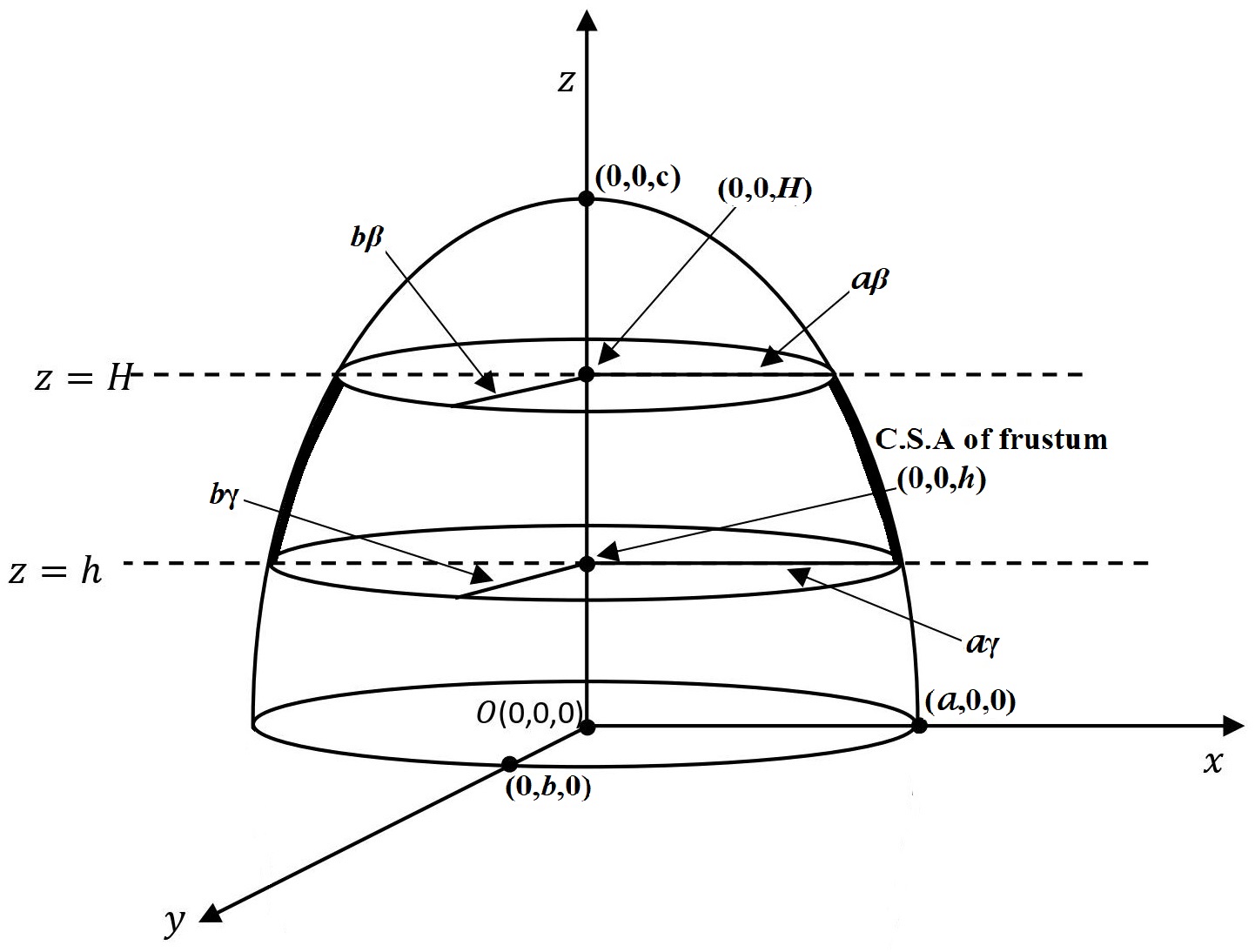}
\end{minipage}%
\caption[short]{Hemiellipsoid $z=c\left( 1-\frac{x^2}{a^2}-\frac{y^2}{b^2}\right)^{\frac{1}{2}}$ lying above the $x$-$y $ plane}
\label{fig:ellipsoid}
\end{figure}

\begin{proof}
The equation of an ellipsoid is given by
\begin{equation}\label{p12.1}
\frac{x^2}{a^2}+\frac{y^2}{b^2}+\frac{z^2}{c^2}=1;~~a>b>c>0.~~~~~~~~~~~~~~~
\end{equation}
Intersection of the surface (\ref{p12.1}) with the planes $z=H$ and $z=h$, where $c>H>h>0$ (parallel to $x$-$y$ plane, lying above $x$-$y$ plane)

\begin{equation*}
\frac{x^2}{a^2}+\frac{y^2}{b^2}=1-\frac{H^2}{c^2}=\beta^2<1~~~~~~~~~~~~~~~~~~~~~~~~~~~~~~~~~~~~~~~~~~~~~~~~~~~~~
\end{equation*}
\begin{equation}
\text{and}~~~~~\frac{x^2}{a^2}+\frac{y^2}{b^2}=1-\frac{h^2}{c^2}=\gamma^2<1,~~~~~~~~~~~~~~~~~~~~~~~~~~~~~~~~~~~~~~~~~~~~~~~~~~~~~
\end{equation}
where $\gamma=\sqrt{\left(1-\frac{h^2}{c^2}\right) },~\beta=\sqrt{\left(1-\frac{H^2}{c^2}\right) }$ and $0 <\beta<\gamma<1$.\\

\noindent
Again from equation (\ref{p12.1}), we have
\begin{equation}
\frac{\partial z}{\partial x}=\frac{-cx}{a^2\sqrt{\left( 1-\frac{x^2}{a^2}-\frac{y^2}{b^2}\right)}},~\frac{\partial z}{\partial y}=\frac{-cy}{b^2\sqrt{\left( 1-\frac{x^2}{a^2}-\frac{y^2}{b^2}\right)}}. ~~~~~~~~~~~~~~~~~~~~~~~~~~~~~~~~~~~~~~~~~~~~~~~~~
\end{equation}

The projection of curved surface area of a frustum of hemiellipsoid in $x $-$y$ plane will be the area between the two concentric ellipses, given by.\\

\begin{equation}
\frac{x^2}{(a\gamma)^2}+\frac{y^2}{(b\gamma)^2}=1~~\text{and}~~\frac{x^2}{(a\beta)^2}+\frac{y^2}{(b\beta)^2}=1;~~a>b,~\gamma>\beta
\end{equation}

\begin{figure}[ht!]
\noindent\begin{minipage}{0.50\textwidth}% adapt widths of minipages to your needs
\includegraphics[width=\linewidth]{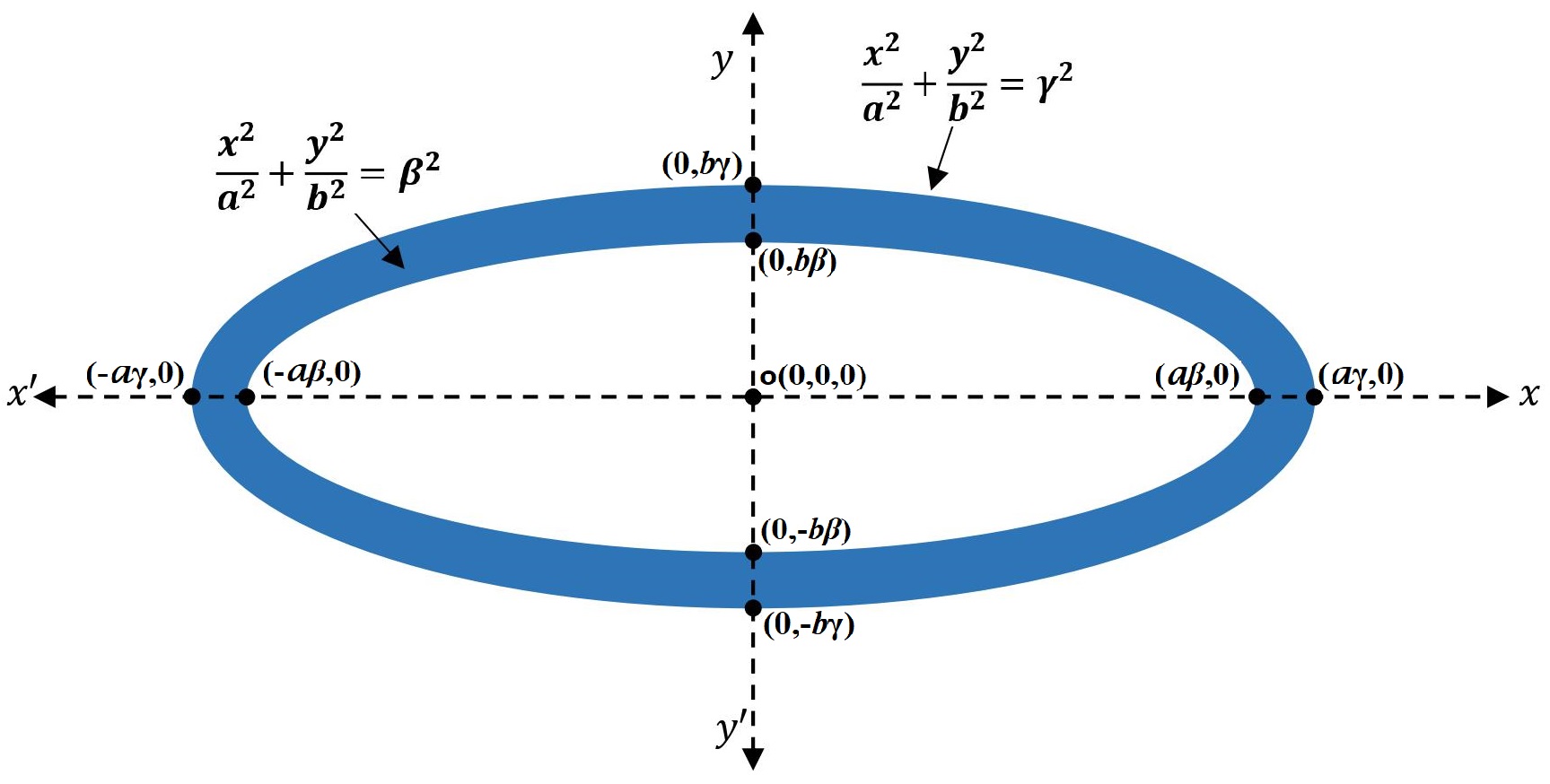}
\end{minipage}%
\caption[short]{Two concentric ellipses $\frac{x^2}{a^2}+\frac{y^2}{b^2}=\gamma^2,~\frac{x^2}{a^2}+\frac{y^2}{b^2}=\beta^2$.}
\label{fig:ellipes}
\end{figure}
\noindent
Substitute the values of $\frac{\partial z}{\partial x}$ and $\frac{\partial z}{\partial y}$ in equation (\ref{a12.9}).
Therefore curved surface area of a frustum of hemiellipsoid will be
\begin{equation}
\hat{S}=\underbrace{\int\int}_{\stackrel{\text{over the area between}}{\text{ two concentric ellipses}}}\sqrt{\left\lbrace 1+\frac{c^2x^2}{a^4\left( 1-\frac{x^2}{a^2}-\frac{y^2}{b^2}\right)}+\frac{c^2y^2}{b^4\left( 1-\frac{x^2}{a^2}-\frac{y^2}{b^2}\right)}\right\rbrace }~dx ~dy.
\end{equation}
Put $x=aX,~y=bY$, therefore, 
\begin{equation}
\hat{S}=ab~\underbrace{\int\int}_{\stackrel{\text{over the area between two concentric}}{\text{  circles}X^2+Y^2=\gamma^2,X^2+Y^2=\beta^2}}\sqrt{\left\lbrace 1+\frac{c^2X^2}{a^2\left( 1-X^2-Y^2\right)}+\frac{c^2Y^2}{b^2\left( 1-X^2-Y^2\right)}\right\rbrace }~dX ~dY.
\end{equation}

\begin{figure}[ht!]
\noindent\begin{minipage}{0.50\textwidth}% adapt widths of minipages to your needs
\includegraphics[width=\linewidth]{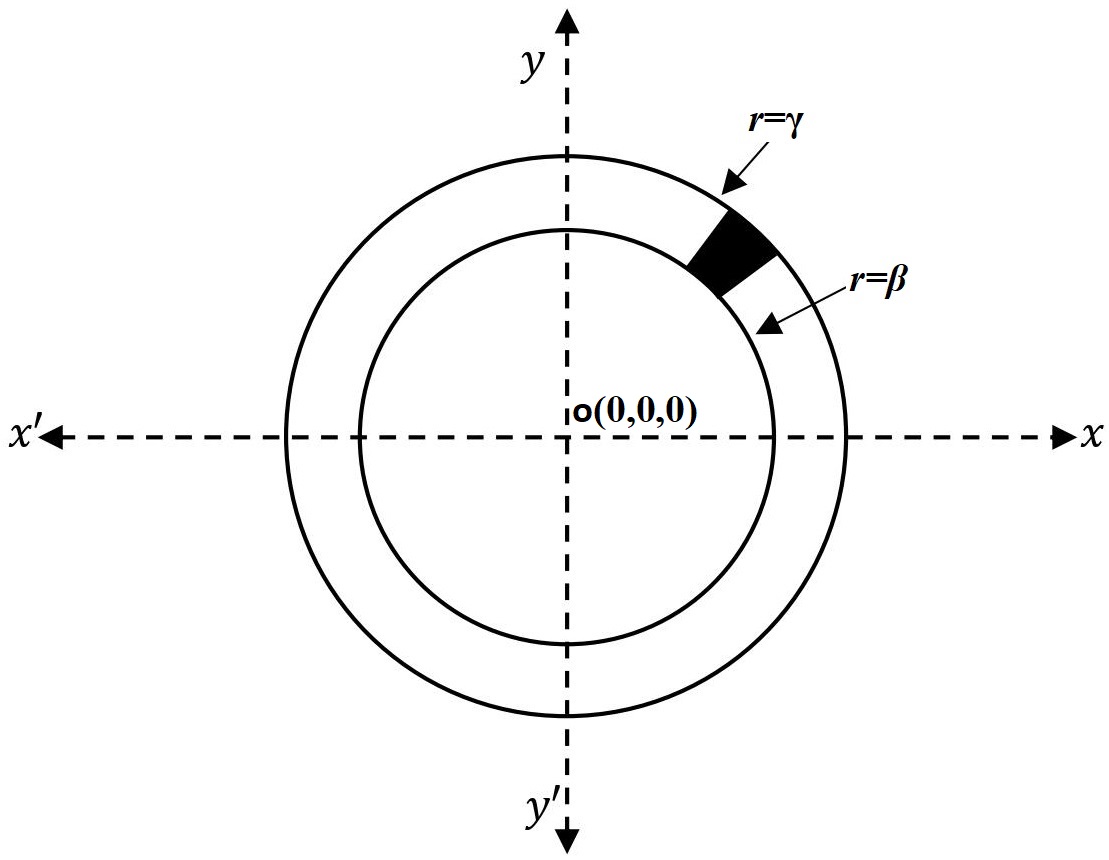}
\end{minipage}%
\caption[short]{Concentric circles $r=\gamma,~r=\beta$ in polar form }
\label{fig:circles}
\end{figure}

Put $X=r\cos \theta,~Y=r\sin \theta$, then $dX~dY=rdr~d\theta$
\begin{equation}\label{13x.120}
\text{Therefore}~\hat{S}=ab~\int_{\theta=-\pi}^{\pi}\left( \int_{r=\beta}^{\gamma}\left\lbrace 1+\frac{c^2r^2}{\left( 1-r^2\right)}\left( \frac{\cos^2\theta}{a^2}+\frac{\sin^2\theta}{b^2}\right) \right\rbrace^{\frac{1}{2}} ~rdr\right)  ~d\theta
\end{equation}
Since due to the non availability of a standard formula of definite/indefinite integrals in the literature of integral calculus for the integration with respect to $"r"$ and $"\theta"$ in double integral (\ref{13x.120}). We solve such integrals exactly through hypergeometric function approach.
\begin{equation}\label{13B.120}
\text{Therefore}~\hat{S}=ab~\int_{\theta=-\pi}^{\pi}\int_{r=\beta}^{\gamma}{}_{1}F_{0}\left[\begin{array}{ll}
\frac{-1}{2};\\
& -\frac{c^2r^2}{\left( 1-r^2\right)}\left( \frac{\cos^2\theta}{a^2}+\frac{\sin^2\theta}{b^2}\right) \\
~-;\end{array}\right]~rdr ~d\theta
\end{equation}
Due to uncertainty in the argument of $~_1F_0(.)$ in equation (\ref{13B.120}), we apply contour integral (\ref{D12.100}) of ${}_{1}F_{0}(.)$ in equation (\ref{13B.120}), we get

\begin{equation}\label{L13.9999}
\hat{S}=ab~\int_{\theta=-\pi}^{\pi}\int_{r=\beta}^{\gamma}\left[ \frac{1}{2\pi i \Gamma\left(\frac{-1}{2}\right) }~{\int}_{s=-i\infty}^{+i\infty}~\Gamma(-s)\Gamma( \frac{-1}{2}+s) \left\lbrace \frac{c^2r^2}{(1-r^2)}\left( \frac{\cos^2\theta}{a^2}+\frac{\sin^2\theta}{b^2}\right) \right\rbrace^s ds \right]  ~rdr~d\theta,
\end{equation}
where $|\arg\left\lbrace c^2\left( \frac{\cos^2\theta}{a^2}+\frac{\sin^2\theta}{b^2}\right) \right\rbrace|<\pi$.\\

\noindent
Interchanging the order of integration in double integral of (\ref{L13.9999})and employing the formulas (\ref{G12.10}) and (\ref{p12.1000}), we get

\begin{equation*}
\hat{S}=\sum_{\gamma\longrightarrow\beta}^{\circledast}\left\lbrace \frac{-b^2\gamma^2}{4 \sqrt{\pi}~i}~{\int}_{s=-i\infty}^{+i\infty}~
 \frac{\Gamma(-s)\Gamma( \frac{-1}{2}+s)}{(1+s)}\left( \frac{c\gamma}{a}\right)^{2s}~{}_{2}F_{1}\left[\begin{array}{ll}
\frac{1}{2},1+s;\\
& 1-\frac{b^2}{a^2}\\
~~~~~~~~1;\end{array}\right]\times\right.
\end{equation*}
\begin{equation}\label{H12.17}
\left.\times~{}_{2}F_{1}\left[\begin{array}{ll}
s,1+s;\\
&\gamma^2\\
~~~2+s;\end{array}\right]ds\right\rbrace , 
\end{equation}
where $|\gamma^2|<1$ and $~|1-\frac{b^2}{a^2}|<1$.

\begin{equation*}
\hat{S}=\sum_{\gamma\longrightarrow\beta}^{\circledast}\left\lbrace \frac{-b^2\gamma^2}{4 \sqrt{\pi}~i}~{\int}_{s=-i\infty}^{+i\infty}~
 \frac{\Gamma(-s)\Gamma( \frac{-1}{2}+s)}{(1+s)}\left( \frac{c\gamma}{a}\right)^{2s}~\left( \sum_{m=0}^{\infty}\frac{
\left( \frac{1}{2}\right)_m(1+s)_m\left( 1-\frac{b^2}{a^2}\right) ^m}{(1)_m~m!}\right) ~ds-\frac{b^2\gamma^2}{4 \sqrt{\pi}~i}\times\right.
\end{equation*}
\begin{equation}\label{P12.17}
\left.\times{\int}_{s=-i\infty}^{+i\infty}~
 \frac{\Gamma(-s)\Gamma( \frac{-1}{2}+s)}{(1+s)}\left( \frac{c\gamma}{a}\right)^{2s}~\left( \sum_{m=0}^{\infty}\frac{
\left( \frac{1}{2}\right)_m(1+s)_m\left( 1-\frac{b^2}{a^2}\right) ^m}{(1)_m~m!}\right)\left(  \sum_{n=1}^{\infty}\frac{
(s)_n(1+s)_n~\gamma ^{2n}}{(2+s)_n~n!}\right) ~ds\ \right\rbrace .
\end{equation}

\noindent
On interchanging the order of summation and integration, we get
\begin{equation*}
\hat{S}=\sum_{\gamma\longrightarrow\beta}^{\circledast}\left\lbrace \frac{-b^2\gamma^2}{4 \sqrt{\pi}~i}~\sum_{m=0}^{\infty}\frac{
\left( \frac{1}{2}\right)_m\left( 1-\frac{b^2}{a^2}\right) ^m}{(1)_m~m!}{\int}_{s=-i\infty}^{+i\infty}~
 \frac{\Gamma( \frac{-1}{2}+s)\Gamma( 1+m+s)\Gamma(-s)}{\Gamma(2+s)}\left( \frac{c^2\gamma^2}{a^2}\right)^{s}~ds-\right.~~~~~~~~~~~~~~~~~~~~~~~~~~~~~~~~~~~~~~~~~~
\end{equation*}
\begin{equation*}
-\frac{b^2\gamma^2}{4 \sqrt{\pi}~i}\sum_{m=0}^{\infty}\sum_{n=1}^{\infty}\frac{
\left( \frac{1}{2}\right)_m\left( 1-\frac{b^2}{a^2}\right) ^m\gamma ^{2n}}{(1)_m~m!~n!}\times~~~~~~~~~~~~~~~~~~~
\end{equation*}
\begin{equation}\label{p12.27}
\left.\times{\int}_{s=-i\infty}^{+i\infty}~
 \frac{\Gamma( \frac{-1}{2}+s)\Gamma( 1+m+s)\Gamma(n+s)\Gamma( 1+n+s)\Gamma(-s)}{\Gamma(0+s)\Gamma( 1+s)\Gamma( 2+n+s)}\left( \frac{c^2\gamma^2}{a^2}\right)^{s}~ds\right\rbrace .
\end{equation}
Replacing $n $ by $n+1$ in equation (\ref{p12.27}) and applying contour integral (\ref{D12.101}), we get
\begin{equation*}
\hat{S}=\sum_{\gamma\longrightarrow\beta}^{\circledast}\left\lbrace b^2\gamma^2\pi\sum_{m=0}^{\infty}\frac{
\left( \frac{1}{2}\right)_m\left( 1-\frac{b^2}{a^2}\right) ^m}{(1)_m}~_2{F}_1\left[\begin{array}{lll}
\frac{-1}{2},1+m;\\
& \frac{-c^2\gamma^2}{a^2}\\
~~~~~~~~~~2~;\\\end{array}\right]+\right. ~~~~~~~~~~~~~~~~~~~~~
\end{equation*}
\begin{equation}\label{A13.51}
\left.+b^2\gamma^4\pi\sum_{m=0}^{\infty}\sum_{n=0}^{\infty}\frac{
\left( \frac{1}{2}\right)_m\left( 1-\frac{b^2}{a^2}\right) ^m~\gamma^{2n}~\Gamma(1+n)}{(1)_m~\Gamma(3+n)~}~\left( \frac{1}{\Gamma(0)}~_4{F}_3\left[\begin{array}{lll}
\frac{-1}{2},1+m,1+n,2+n;\\
& \frac{-c^2\gamma^2}{a^2}\\
~~~~~~~~~~~~~0,~1,~3+n~;\\\end{array}\right]\right) \right\rbrace ,
\end{equation}
where $\frac{-c^2\gamma^2}{a^2}<1.$\\

On further simplification, we arrive at the result as asserted in Theorem \ref{TH4} with the aid of the assertion given in (\ref{A13.50}).
\end{proof}

\section{Applications of the closed form (\ref{H12.10})}\label{HE1}

\noindent
To understand the subject matter, we shall discuss some numerical problems:\\

 \noindent
 {\bf Problem 1.} Find the exact curved surface area of a frustum of hemiellipsoid $z=\left( 1-\frac{x^2}{9}-\frac{y^2}{4}\right)^{\frac{1}{2}}$ bounded by the planes $z=\sqrt{0.6}$ and $z=\sqrt{0.2}$.\\
 
 \noindent
 {\bf Solution: } We have $a=3,~b=2,~c=1,~H=\sqrt{0.6}=0.774596669...\text{ and}~h=\sqrt{0.2}=0.447213595...,$ then $\beta=0.2\text{ and }\gamma =0.6$\\
                      
Substituting the above values of $a,b,c,h,H,\beta$ and $\gamma$ in the closed form (\ref{H12.10}), we have
\begin{equation*}
\hat{S}=(1.44 \pi)F_2\left[1;\frac{1}{2},\frac{-1}{2};1,2;0.55...,-0.04\right]-(0.16\pi)F_2\left[1;\frac{1}{2},\frac{-1}{2};1,2;0.55...,-0.0044...\right]+
\end{equation*}
\begin{equation*}
+ (0.003456\pi)~F^{(3)}\left[\begin{array}{ll}
-~\text{\bf {:}}\text{\bf {:}}~-;2,3;~2~\text{\bf {:}}~\frac{1}{2}~;~1~;~~~\frac{1}{2}~\text{\bf;}\\
& 0.55...\text{\bf,}~0.36\text{\bf,}~0.04\\
-~\text{\bf {:}}\text{\bf {:}}~-;~~4~;-~\text{\bf {:}}~1~;~2~;~2,2~\text{\bf;}\\
\end{array}\right]-  
\end{equation*}
\begin{equation}\label{P12.50}
-(0.000004740...\pi)~F^{(3)}\left[\begin{array}{ll}
-~\text{\bf {:}}\text{\bf {:}}~-;2,3;~2~\text{\bf {:}}~\frac{1}{2}~;~1~;~~~\frac{1}{2}~\text{\bf;}\\
& 0.55...\text{\bf,}~0.04\text{\bf,}~0.0044...\\
-~\text{\bf {:}}\text{\bf {:}}~-;~~4~;-~\text{\bf {:}}~1~;~2~;~2,2~\text{\bf;}\\
\end{array}\right]. 
\end{equation}

Now using the Mathematica program to calculate the sum of the double and triple infinite series in equation (\ref{P12.50}), we get the {\bf exact} curved surface area of a frustum of hemiellipsoid $z=\left( 1-\frac{x^2}{9}-\frac{y^2}{4}\right)^{\frac{1}{2}}$ bounded by the planes $z=\sqrt{0.6}$ and $z=\sqrt{0.2}$ is $\hat{S}=6.1749238... ~\text{square units.}$\\

 \noindent
 {\bf Problem 2.} Find the exact curved surface area of a frustum of hemiellipsoid $z=2\left( 1-\frac{x^2}{25}-\frac{y^2}{9}\right)^{\frac{1}{2}}$ bounded by the planes $z=\sqrt{3.64}$ and $z=\sqrt{1.44}$.\\
 
 \noindent
 {\bf Solution: } We have $a=5,~b=3,~c=2,~H=\sqrt{3.64}=1.9078784...\text{ and}~h=\sqrt{1.44}=1.2,$ then $\beta=0.3\text{ and }\gamma=0.8$\\
                            
Substituting the above values of $a,b,c,h,H,\beta$ and $\gamma$ in the closed form (\ref{H12.10}), we have
\begin{equation*}
\hat{S}=(5.76 \pi)F_2\left[1;\frac{1}{2},\frac{-1}{2};1,2;0.64,-0.1024\right]-(0.81\pi)F_2\left[1;\frac{1}{2},\frac{-1}{2};1,2;0.64,-0.0144\right]+
\end{equation*}
\begin{equation*}
+ (0.0629146\pi)~F^{(3)}\left[\begin{array}{ll}
-~\text{\bf {:}}\text{\bf {:}}~-;2,3;~2~\text{\bf {:}}~\frac{1}{2}~;~1~;~~~\frac{1}{2}~\text{\bf;}\\
& 0.64\text{\bf,}~0.64\text{\bf,}~0.1024\\
-~\text{\bf {:}}\text{\bf {:}}~-;~~4~;-~\text{\bf {:}}~1~;~2~;~2,2~\text{\bf;}\\
\end{array}\right]-  
\end{equation*}
\begin{equation}\label{L12.50}
-(0.00017496\pi)~F^{(3)}\left[\begin{array}{ll}
-~\text{\bf {:}}\text{\bf {:}}~-;2,3;~2~\text{\bf {:}}~\frac{1}{2}~;~1~;~~~\frac{1}{2}~\text{\bf;}\\
& 0.64\text{\bf,}~0.09\text{\bf,}~0.0144\\
-~\text{\bf {:}}\text{\bf {:}}~-;~~4~;-~\text{\bf {:}}~1~;~2~;~2,2~\text{\bf;}\\
\end{array}\right]. 
\end{equation}

Now using the Mathematica program to calculate the sum of the double and triple infinite series in equation (\ref{L12.50}), we get the {\bf exact} curved surface area of a frustum of hemiellipsoid $z=2\left( 1-\frac{x^2}{25}-\frac{y^2}{9}\right)^{\frac{1}{2}}$ bounded by the planes $z=\sqrt{3.64}$ and $z=\sqrt{1.44}$ is $\hat{S}=28.41904... ~\text{square units.}$\\

\section{Conclusion}
In this paper, we obtained the closed form for the exact curved surface area of a frustum of hemiellipsoid $z=c\left( 1-\frac{x^2}{a^2}-\frac{y^2}{b^2}\right)^{\frac{1}{2}}$ obtained by the interception of a hemiellipsoid with the two parallel planes $z=h$ and $z=H ~(H>h)$, through hypergeometric function approach i.e, by using series rearrangement technique, Mellin-Barnes type contour integral representations of generalized hypergeometric function$~_pF_q(z)$ and Meijer's $G$-function; in terms of Appell's function of second kind and general triple hypergeometric series of Srivastava. The formula is neither available in the literature of mathematics nor found in any mathematical tables. We conclude that several formulas for exact curved surface area of frustum of other three dimensional figures can be derived in an analogous manner, using Mellin-Barnes contour integration. Moreover, the results deduced above (presumably new), have potential applications in the fields of applied mathematics, statistics and engineering sciences.

\section*{Disclosure statement}
No potential conflict of interest was reported by the authors.

\end{document}